\titleformat*{\section}{\large \bfseries}
\titlespacing{\section}{0pt}{\parskip}{\parskip}
\titleformat*{\subsection}{\normalsize \bfseries}
\titlespacing{\subsection}{0pt}{\parskip}{\parskip}
\numberwithin{equation}{section}
\newtheorem{theorem}{Theorem}[section]
\newtheorem{lemma}[theorem]{Lemma}
\newtheorem{corollary}[theorem]{Corollary}
\newtheorem{proposition}[theorem]{Proposition}
\newtheorem{definition}[theorem]{Definition}
\newtheorem{remark}[theorem]{Remark}
\def\cS{\mathcal{S}}
\def\P{\mathbb{P}}
\def\Y{\mathbf{Y}}
\DeclareMathOperator*{\argmin}{arg\,min}
\title{\large{On the regularity of American options with regime-switching uncertainty}}
\author{Saul D. Jacka\thanks{Department of Statistics, University Of Warwick and 
The Alan Turing Institute, British Library, 96 Euston Road, London NW1 2DB London, UK (s.d.jacka@warwick.ac.uk).}
\and Adriana Ocejo\thanks{Department of Mathematics and Statistics, University of North Carolina at Charlotte (amonge2@uncc.edu).}}
\date{June 2017}
\begin{document}

\maketitle

\begin{abstract}
We study the regularity of the stochastic representation of the solution of a class of initial-boundary value problems related to a regime-switching diffusion.
This representation is related to the value function of a finite-horizon optimal stopping problem such as the price of an American-style option in finance.
We show continuity and smoothness of the value function using coupling and time-change techniques.
As an application, we find the minimal payoff scenario for the holder of an American-style option in the presence of regime-switching uncertainty under the assumption that the transition rates are known to lie within level-dependent compact sets.
\end{abstract}



\section{Introduction}

\noindent
\label{Intro}

Let $B=(B_t)_{t\geq 0}$ be a Brownian motion and
$Y=(Y_t)_{t\geq 0}$ be a continuous-time finite-state Markov chain, with respect to
a common filtered probability space $(\Omega,\mathcal{F},(\mathcal{F}_t)_{t\geq 0}, P)$,
where $(\mathcal{F}_t)_{t\geq 0}$ satisfies the usual conditions.
Note that  Lemma 2.5 of \cite{JM} tells us that $B$ and $Y$ are independent.

Let $\mathcal{S}=\{1,2,\ldots,m\}$ denote the state space of $Y$ and $\pi=(\pi[i,j])$ its $Q$-matrix so that
\[
\pi[i,j]\geq 0, \quad \mbox{for $i\neq j$ \quad and} \quad \sum_{j=1}^m \pi[i,j]=0 \quad \mbox{for $i=1,2,\ldots,m$.}
\]

Suppose that the process $X=(X_t)_{t\geq 0}$ obeys
the stochastic differential equation with regime-switching
\begin{equation}\label{eq:dynX}
X_t=x+\int_0^t a(X_s,Y_s)dB_s+\int_0^t\mu(X_s,Y_s)ds, \qquad x\in \mathbb{R},
\end{equation}
where, for each $y\in\cS$, $a(\cdot,y)$ and $\mu(\cdot,y)$ are, locally Lipschitz continuous  on the state space of $X$ and $a$ is {\em positive}.
Denote by $\mathbb{L}^\pi$ the operator related to the generator of the Markov process $(X,Y)$, given by
\begin{equation}\label{bigL}
\begin{split}
\mathbb{L}^\pi w(x,y,t) = \frac{1}{2} a^2(x,y)w_{xx}(x,y,t)&+\mu(x,y)w_x(x,y,t)-w_t(x,y,t) \\
   & \hspace{-1cm} + \sum_{y'\in \mathcal{S}, y'\neq y}[w(x,y',t)-w(x,y,t)]\pi[y,y'].
\end{split}
\end{equation}
For a given rate matrix $\pi$, consider the value of the optimal stopping problem with finite time horizon $T>0$ and regime-switching associated with $(X,Y)$:
\begin{equation} \label{eq:valueOSPintro}
v(x,y,t)=\sup_{\tau\leq t}\, E_{x,y}(e^{-\alpha \tau} g(X_\tau)), \qquad (x,y,t)\in \mathbb{R}\times \mathcal{S}\times [0,T],
\end{equation}
where
$\alpha\geq 0$ and $g:\mathbb{R} \rightarrow [0,\infty)$ is assumed to be a
$\beta$-H\"older continuous function for some $0<\beta\leq 1$.
We write $E_{x,y}$ to denote the expectation conditioned on $(X_0=x, Y_0=y)$.

We are primarily interested in analytical properties of the value function in (\ref{eq:valueOSPintro}).
In particular, we will show that for each $y\in \mathcal{S}$ the function
$v(\cdot,y,\cdot):\mathbb{R}\times[0,T]\rightarrow \mathbb{R}$ is $\beta/2$-H\"older continuous
and $v$ solves the initial-boundary value problem
\begin{equation} \label{eq:IBP}
\begin{array}{rll}
(\mathbb{L}^\pi-\alpha\, )  v(x,y,t) & =0,      & \mbox{in}\; \mathcal{C} \\
                            v(x,y,0) & =g(x),   & \mbox{in}\; \mathbb{R}\times \mathcal{S}\times\{0\} \\
                            v(x,y,t) & = g(x),  & \mbox{on}\; \partial \mathcal{C}
\end{array}
\end{equation}
where $\mathcal{C}=\{(x,y,t)\in \mathbb{R}\times \mathcal{S}\times(0,T]:v(x,y,t)>g(x)\}$.
This in turn yields that $v(\cdot,y,\cdot)$ is of the class $C^{2,1}$ in the set
\[
\mathcal{C}_y=\{(x,t)\in \mathbb{R}\times (0,T]:\, v(x,y,t)>g(x)\}.
\]

The stochastic representation of the solution of a problem of the form in (\ref{eq:IBP}) and
in the setting where $X$ is a diffusion without regime-switching is of course very well-known (see \cite{Friedman1975})
and the relation to an optimal stopping problem is standard (\cite{Oks}, \cite{PS}).
This relationship allows the use of PDE methods to tackle the latter problem \cite{PS}, such as finding a solution to an American option problem.
However, in order to establish the desired connection, one typically requires continuity of the value function $v$.
To this end, a number of subtle regularity conditions on the parameters of the problem must be imposed.
In Section \ref{sec:cont} we show that $v(\cdot,y,\cdot)$ \textit{is} locally $\beta/2$-H\"older continuous (Theorem \ref{thm:cont} below).
Our results generalise those of Fleming and Soner \cite{FS} and Byraktar, Song and Yang \cite {BSY} in the context of optimal stopping (although they deal with combined control and stopping), since they assume uniformly Lipschitz coefficients and payoff and do not allow the jumps present in our model.

In the context of regime-switching diffusions, an early explicit example is Di Masi, Kabanov and Runggaldier \cite{MKR}, where they consider option pricing in an incomplete market with regime switching. More recently Baran, Yin and Zhu \cite{BYZ2013} studied the stochastic representation
for a generic initial-boundary value problem generalizing the results by Friedman \cite{Friedman1975}.  
A standing assumption in their theory is that the PDE problem is defined on an open and bounded domain on the state space of $X$.
In practice, such domains may be unbounded but we can get around this issue by a local argument which requires some continuity of the underlying
value function (see proof of Theorem \ref{thm:regThm}).
Continuity of $v$ also plays a central role for instance, in the derivation of optimal stopping rules and to determine the shape of the optimal stopping boundary.
In general, so-called tangency problems (see \cite{FS}) may prevent continuity of the value function, but in our model this is precluded by a local ellipticity assumption (see the proof of Proposition \ref{prop:locallyLipinx}).

The study of the solution of an American-style option problem of the form in (\ref{eq:valueOSPintro})
has been addressed in the literature under the special case of Markov-modulated geometric Brownian motion dynamics $dS_t=S_t(\sigma(Y_t)dB_t+\mu(Y_t)dt)$.
For instance, Buffington and Elliott \cite{BE2002} analyze the American put, with $g(x)=(K-x)^+$, and discuss the determination of the value function and the shape of the optimal stopping boundary by means of a direct application of It\^o's formula.
More recently, Le and Wang \cite{LeWang} 
studied analytical properties of the value function of an American-style option under the condition that $g$ belongs
to the class of non-negative, non-increasing, convex functions with bounded support and twice differentiable on the support.

In Sections \ref{sec:cont} and \ref{sec:smooth}, we study continuity and smoothness of $v$ and we only assume that $g$ is H\"older continuous. 
Our approach is of interest because it uses  coupling arguments as well as classical path properties of Brownian motion,
which are purely probabilistic tools.
The smoothness results ensure that standard numerical schemes for solving the optimal stopping problem are stable.

Throughout the paper we make the following standing assumption: 
\begin{center}\textbf { (A1) }
$E_{x,y} \left(\sup_{0\leq t\leq T} (|a(X_t,Y_t)|+|\mu(X_t,Y_t)|)\,\right) \leq N$
for some $N=N(x,y,T)<\infty$ and $N$ is continuous as a function of $x$.
\end{center}
Notice that if $a(\cdot,y)$ and $\mu(\cdot,y)$ satisfy linear growth conditions for each $y$ then (A1)
follows by results on estimates of the moments of regime-switching diffusions (see \ref{app:estimates}).

In financial applications, regime-switching processes have been used to better reproduce asset price behavior from market data.
For example, they can generate a volatility smile \cite{YZZ2006}, which is impossible under a constant volatility.
Commodity prices such as electricity are prone to spikes \cite{Carmona}, and modeling based on regime-switching coefficients has been proposed \cite{Mountetal}.
A similar effect of dramatic price rises and crashes appears in so-called asset price bubbles. Bubbles have been addressed as following a regime-switching structure \cite{bubbles}
and characterized as strict local martingales (see \cite{CoxHobson}, \cite{JPS2010}, \cite{Protter2013} and references therein).

Typically, the transition rates of the Markov chain are specified via a constant $Q$-matrix.
However, empirical studies suggest that time-varying transition and path-dependent rates
improve the forecasting ability of the phases of an economy such as expansions,
contractions and duration of the regimes (see \cite{Diebold}, \cite{Filardo}, \cite{Kanamuraetal} and references therein).

In Section \ref{sec:extsce}, we use the results of the foregoing sections to find the minimal payoff scenario for the holder of an American-style option
who only knows level-dependent bounds on the transition rates of the Markov chain.
More precisely, we let $\pi$ denote an {\it admissible} time-varying, $(\mathcal{F}_t)$-adapted rate matrix $\pi=(\pi_t)_{t\geq 0}$ such that for each $t\geq 0$,
the $Q$-matrix $\pi_t=(\pi_t[i,j])$ satisfies
\begin{equation} \label{eq:constraint}
\pi_t[i,i+z]\in A_{i,z}^+,\quad \pi_t[i,i-z]\in A_{i,z}^-, \qquad t\geq 0
\end{equation}
where $A_{i,z}^+,A_{i,z}^-$ are compact subsets of $(0,\infty)$.
Denote by $\mathcal{A}$ the set of all admissible rate matrices.
We aim to find, for each initial condition $(x,y,t)$, an admissible rate matrix $\pi^l$ that attains the infimum
\begin{equation} \label{eq:infvalue}
V^l(x,y,t)=\inf_{\pi \in \mathcal{A}}\,v(x,y,t;\pi).
\end{equation}
Here, $v(x,y,t;\pi)$ is as in (\ref{eq:valueOSPintro}) and the notation is to emphasize the dependence on $\pi$.

\section{Continuity of $v$} \label{sec:cont}

First we show that the function $v(x,y,\cdot)$ is $\beta/2$-H\"older continuous on $[0,T]$ over a neighborhood of $x$.
This is where Assumption (A1) is used.
Next, we will see that $v(\cdot,y,t)$ is locally Lipschitz continuous in the set $\{x:v(x,y,t)>g(x)\}$ via path properties of Brownian motion.
As a consequence, it is seen that these properties together imply the main result of this section, Theorem \ref{thm:cont}.

\pagebreak
\begin{proposition} \label{prop:holderInTime}
For each $(x,y)\in\mathbb{R}\times \mathcal{S}$, the function $v(x,y,\cdot)$ is $\beta/2$-H\"older continuous uniformly over a neighborhood of $x$.
\end{proposition}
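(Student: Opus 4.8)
The plan is to first note the monotonicity $t\mapsto v(x,y,t)$ is nondecreasing: enlarging the horizon enlarges the admissible set $\{\tau\le t\}$ and the payoff $e^{-\alpha\tau}g(X_\tau)\ge 0$. Thus for $0\le s\le t\le T$ we have $v(x,y,t)-v(x,y,s)\ge 0$ and only the one-sided bound $v(x,y,t)-v(x,y,s)\le C(t-s)^{\beta/2}$ remains. For this, fix $\epsilon>0$ and choose a stopping time $\tau\le t$ that is $\epsilon$-optimal for $v(x,y,t)$; the truncation $\tau\wedge s$ is admissible for $v(x,y,s)$, so
\[
v(x,y,t)-v(x,y,s)\le \epsilon + E_{x,y}\!\left[e^{-\alpha\tau}g(X_\tau)-e^{-\alpha(\tau\wedge s)}g(X_{\tau\wedge s})\right].
\]
On $\{\tau\le s\}$ the bracket vanishes, so only the event $\{\tau>s\}$, where $\tau\wedge s=s$, contributes.

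On that event I would split
\[
e^{-\alpha\tau}g(X_\tau)-e^{-\alpha s}g(X_s)=e^{-\alpha\tau}\bigl(g(X_\tau)-g(X_s)\bigr)+\bigl(e^{-\alpha\tau}-e^{-\alpha s}\bigr)g(X_s)
\]
and bound the two terms separately. For the discount term, $|e^{-\alpha\tau}-e^{-\alpha s}|\le\alpha(t-s)$ on $\{\tau>s\}$ and $E_{x,y}[g(X_s)]<\infty$ by (A2); since $t-s\le T$ we have $t-s\le T^{1-\beta/2}(t-s)^{\beta/2}$, so this term is $O\bigl((t-s)^{\beta/2}\bigr)$. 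For the first term, the $\beta$-H\"older continuity of $g$ gives $|g(X_\tau)-g(X_s)|\le C_g|X_\tau-X_s|^\beta$, reducing matters to controlling $E_{x,y}\bigl[|X_\tau-X_s|^\beta\mathbf 1_{\{\tau>s\}}\bigr]$.

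This increment estimate is the crux, and is exactly where the absence of drift enters. Since $X$ is a continuous local martingale with $\langle X\rangle_t=\int_0^t a^2(X_u)\sigma^2(Y_u)\,du$, Assumption (A1) together with boundedness of $\sigma$ on the finite set $\mcS$ yields $E_{x,y}\langle X\rangle_T<\infty$, so $X$ is a genuine square-integrable martingale on $[0,T]$ and, by the Dambis--Dubins--Schwarz representation, $X=x+W_{\langle X\rangle}$ for a Brownian motion $W$. Applying optional sampling to $X^2-\langle X\rangle$ at the bounded stopping times $s\le\tau\vee s\le t$ gives
\[
E_{x,y}\bigl[(X_\tau-X_s)^2\mathbf 1_{\{\tau>s\}}\bigr]\le E_{x,y}\bigl[\langle X\rangle_{\tau\vee s}-\langle X\rangle_s\bigr]\le \sigma_{\max}^2\,(t-s)\,E_{x,y}\Bigl[\sup_{0\le u\le T}a^2(X_u)\Bigr]\le \sigma_{\max}^2\, N(x,y,T)\,(t-s),
\]
where $\sigma_{\max}^2=\max_{y\in\mcS}\sigma^2(y)$. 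A Lyapunov (Jensen) inequality for the exponent $\beta/2\le 1$ then yields $E_{x,y}[|X_\tau-X_s|^\beta\mathbf 1_{\{\tau>s\}}]\le (\sigma_{\max}^2 N(x,y,T))^{\beta/2}(t-s)^{\beta/2}$. Collecting both contributions and letting $\epsilon\downarrow0$ delivers the $\beta/2$-H\"older bound, with a constant assembled from $C_g,\alpha,T,\sigma_{\max}$ and $N(x,y,T)$.

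For the stated uniformity over a neighborhood of $x$, I would use that $N(\cdot,y,T)$ is continuous by (A1) and that $g$ is locally bounded, so each of the above constants can be majorized on a compact neighborhood of $x$; for the discount term one also bounds $E_{x',y}[g(X_s)]\le g(x')+C_g\bigl(\sigma_{\max}^2 s\,N(x',y,T)\bigr)^{\beta/2}$ by the same moment estimate and $\beta$-H\"older continuity. A single H\"older constant then serves all starting points in that neighborhood. I expect the main obstacle to be precisely this crux estimate: justifying the passage from local to true martingale and controlling the $\beta$-moment of the increment over the \emph{random} time interval $[\langle X\rangle_s,\langle X\rangle_\tau]$ uniformly in $\tau$, so as to extract exactly the Brownian scaling exponent $\tfrac12$ that converts $(t-s)$ into $(t-s)^{\beta/2}$.
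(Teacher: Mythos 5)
Your proof is correct, and it shares the paper's overall skeleton: monotonicity of $v(x,y,\cdot)$ in $t$, an $\epsilon$-optimal stopping time for the longer horizon truncated at the shorter one, reduction to a $\beta$-moment bound for the increment $X_\tau-X_s$ on $\{\tau>s\}$, Jensen's inequality together with (A1), and uniformity in $x$ from the continuity of $N(\cdot,y,T)$. Where you genuinely diverge is the crux estimate. The paper time-changes the local martingale $M_t=\int_0^t a(X_u)\sigma(Y_u)\,dB_u$ into a Brownian motion $W$ via Dambis--Dubins--Schwarz, reduces by stationarity of increments to $\sup_{u\le\langle M\rangle_{t_2}-\langle M\rangle_{t_1}}|W_u|$, and applies the maximal-moment inequality $E\bigl[\sup_{u\le\sigma}|W_u|^\beta\bigr]\le C_\beta\,E[\sigma^{\beta/2}]$ from Revuz--Yor. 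You instead note that (A1) forces $E_{x,y}\langle X\rangle_T<\infty$, so $X-x$ is a genuine $L^2$-martingale on $[0,T]$, and obtain $E_{x,y}\bigl[(X_{\tau\vee s}-X_s)^2\bigr]=E_{x,y}\bigl[\langle X\rangle_{\tau\vee s}-\langle X\rangle_s\bigr]\le\sigma_+^2\,N\,(t-s)$ by optional sampling of $X$ and of $X^2-\langle X\rangle$ at bounded stopping times, followed by Jensen. This is more elementary --- no time change, no maximal inequality, no universal constant $C_\beta$ --- and it dissolves the difficulty you flag in your closing paragraph: your $L^2$ identity needs only the increment evaluated at the stopping time, never a supremum over the random time interval, so that worry is moot. (Your mention of DDS is in fact superfluous: nothing in your computation uses the representation $X=x+W_{\langle X\rangle}$. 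The paper's time-change machinery is not wasted on its side, since the same construction is reused in the coupling proof of Proposition \ref{prop:locallyLipinx}.) One simplification you missed: on $\{\tau>s\}$ the discount term $(e^{-\alpha\tau}-e^{-\alpha s})g(X_s)$ is nonpositive because $g\ge0$, so it can be dropped outright rather than bounded; this removes the appeal to (A2) and the extra uniform bound on $E_{x',y}[g(X_s)]$ in your last paragraph --- the paper's one-line inequality $e^{-\alpha\tau_2}g(X_{\tau_2})-e^{-\alpha\tau_1}g(X_{\tau_1})\le e^{-\alpha\tau_2}|g(X_{\tau_2})-g(X_{\tau_1})|$ exploits exactly this observation.
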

\begin{proof}
Fix $(x,y)\in \mathbb{R}\times\mathcal{S}$. Take $C>0$  such that $|g(z)-g(z')|\leq C|z-z'|^\beta$ for all $z,z'\in \mathbb{R}$, with $0<\beta\leq 1$.

Let $0\leq t_1<t_2\leq T$ and take an arbitrary $\epsilon>0$.
Suppose that $\tau_2$ is an $\epsilon$-optimal stopping time for the problem $v(x,y,t_2)$ and set $\tau_1=\tau_2\wedge t_1$ so that $\tau_1$ is suboptimal for $v(x,y,t_1)$.
Then $\tau_1 \leq \tau_2\leq t_2$ and since $v(x,y,\cdot)$ must be an increasing function of time we have, defining the martingale $M$ by $M_t=\int_0^t a(X_s,Y_s)dB_s$,
\begin{eqnarray}\label{Hol}
\begin{aligned}
0   & \leq v(x,y,t_2)-v(x,y,t_1) \leq \epsilon+ E_{x,y}( e^{-\alpha \tau_2}g(X_{\tau_2}))-E_{x,y}(e^{-\alpha \tau_1} g(X_{\tau_1}))\\
    & \leq \epsilon+E_{x,y}( e^{-\alpha \tau_2}|g(X_{\tau_2})-g(X_{\tau_1})|)
     \leq \epsilon+C\,E_{x,y}(|X_{\tau_2}-X_{\tau_1}|^\beta) \\
    &\leq \epsilon+C\,E_{x,y}(|X_{\tau_2}-X_{t_1}|^\beta I_{\{t_1< \tau_2\}}) \\
    &\leq \epsilon+C'\,E_{x,y}\left(\left\{|M_{\tau_2}-M_{t_1}|^\beta +\left|\int_{t_1}^{\tau_2}\mu(X_s,Y_s)ds\right|^\beta \right\} I_{\{t_1<\tau_2\}}\right)\\
    &\leq  \epsilon+C'\,E_{x,y}\left(\sup_{t_1\leq t\leq t_2}|M_t-M_{t_1}|^\beta +\left[\sup_{t_1\leq t\leq t_2}|\mu(X_t,Y_t)|(t_2-t_1)\right]^\beta\right)\\
    &\leq \epsilon+C''\,E_{x,y}\left( [\langle M\rangle_{t_2}-\langle M\rangle_{t_1}]^{\frac{\beta}{2}}+\left[\sup_{t_1\leq t\leq t_2}|\mu(X_t,Y_t)|(t_2-t_1)\right]^\beta \right)\\
\end{aligned}
\end{eqnarray}
where the last inequality follows from the Burkholder-David-Gundy inequalities  \cite[Corollary IV.4.2]{Rev-Yor}, and constants vary from line to line.

Now
$$
\langle M\rangle_t=\int_0^ta^2(X_s,Y_s)ds
$$
so the last line of (\ref{Hol}) is dominated by
\begin{eqnarray}\label{Hol2}
\begin{aligned}
&\epsilon+C''\,E_{x,y}\left( \left[\int_{t_1}^{t_2}a^2(X_s,Y_s)ds\right]^{\frac{\beta}{2}}
    +\left[\sup_{t_1\leq t\leq t_2}|\mu(X_t,Y_t)|(t_2-t_1) \right]^\beta\right)\\
&\leq \epsilon
    +C''\,E_{x,y}\left( \left[\sup_{t_1\leq t\leq t_2}a^2(X_t,Y_t)(t_2-t_1)\right]^{\frac{\beta}{2}}
    +\left[\sup_{t_1\leq t\leq t_2}|\mu(X_t,Y_t)|(t_2-t_1) \right]^\beta\right).
\end{aligned}
\end{eqnarray}
Now taking $t_2\leq t_1+1$ without loss of generality, and noting that $\beta\leq 1$, we see that the last line in (\ref{Hol2}) is dominated by
$$
\epsilon+C'''\,E_{x,y}\left(\sup_{ t\leq T}(|a(X_t,Y_t)|+|\mu(X_t,Y_t)|)\right)(t_2-t_1)^{\frac{\beta}{2}}\leq \epsilon+C''' N(x,y,T)(t_2-t_1)^{\frac{\beta}{2}}, 
$$
where by (A1), $N=N(x,y,T)$ depends on $x,y$ and $T$ only and is continuous in $x$. 
Finally,  recalling that $\epsilon>0$ can be made arbitrarily small, we obtain
\begin{equation} \label{eq:Lipsconstant}
0\leq v(x,y,t_2)-v(x,y,t_1) \leq C'''(t_2-t_1)^{\beta/2} N,
\end{equation}
and conclude that $v$ is $\beta/2$-H\"older continuous in $t$, uniformly in a neighborhood of $x$.
\end{proof}

The proof of the next lemma uses standard techniques so we defer the proof to the \ref{app:proofs}.
We note that we only impose  continuity and boundedness from below as conditions on $g$  in this result and  Proposition \ref{prop:locallyLipinx} below.

\begin{lemma} \label{lemma:lsc}
Assume that $g$ is continuous. For each $(y,t)\in \mathcal{S}\times (0,T]$, the function $v(\cdot,y,t)$ is lower semi-continuous in $\mathbb{R}$.
\end{lemma}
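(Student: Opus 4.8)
The goal is to verify that $\liminf_{x\to x_0} v(x,y,t)\geq v(x_0,y,t)$ for every $x_0\in\mbR$, which is exactly lower semi-continuity at $x_0$. Since $v(\cdot,y,t)$ is a supremum over stopping times of the maps $x\mapsto E_{x,y}[e^{-\alpha\tau}g(X_\tau)]$, the natural strategy is to fix a near-optimal stopping rule for the starting point $x_0$ and to re-use it for nearby starting points, controlling the resulting error by the continuous dependence of the diffusion on its initial condition.

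Concretely, I would realise all the processes on one probability space by a coupling: drive the SDE (\ref{eq:dynX}) by a single Brownian motion $B$ and a single chain $Y$, and for each starting point $x$ let $X^x$ denote the solution of $X^x_s=x+\int_0^s a(X^x_u)\sigma(Y_u)\,dB_u$. Because $\sigma(Y_\cdot)$ is bounded (the chain has finitely many states) and $a$ is locally Lipschitz, standard pathwise SDE estimates give, for any sequence $x_n\to x_0$, the local-uniform convergence $\sup_{s\leq t}|X^{x_n}_s-X^{x_0}_s|\to 0$ in probability (and almost surely along a subsequence). Now fix $\epsilon>0$ and choose, by definition of the supremum, a stopping time $\tau^\ast\leq t$ that is $\epsilon$-optimal for $v(x_0,y,t)$, i.e. $E_{x_0,y}[e^{-\alpha\tau^\ast}g(X^{x_0}_{\tau^\ast})]>v(x_0,y,t)-\epsilon$. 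Crucially, $\tau^\ast$ is a stopping time for the common filtration generated by $(B,Y)$ and satisfies $\tau^\ast\leq t$, so it is an admissible (if generally suboptimal) choice for every nearby problem $v(x,y,t)$; hence $v(x,y,t)\geq E_{x,y}[e^{-\alpha\tau^\ast}g(X^x_{\tau^\ast})]$.

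It remains to pass to the limit $x\to x_0$ in this lower bound. Since $g\geq 0$, there is no need for a dominating function: Fatou's lemma applies directly. Using the local-uniform convergence $X^{x_n}_{\tau^\ast}\to X^{x_0}_{\tau^\ast}$ (valid because $\tau^\ast\leq t$) together with the continuity of $g$, one obtains
\[
\liminf_{n} v(x_n,y,t)\geq \liminf_n E_{x_n,y}[e^{-\alpha\tau^\ast}g(X^{x_n}_{\tau^\ast})] \geq E_{x_0,y}[e^{-\alpha\tau^\ast}g(X^{x_0}_{\tau^\ast})] > v(x_0,y,t)-\epsilon .
\]
As $\epsilon>0$ is arbitrary this yields $\liminf_n v(x_n,y,t)\geq v(x_0,y,t)$, and since the sequence was arbitrary the function $v(\cdot,y,t)$ is lower semi-continuous. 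If the flow converges only in probability, I would run the identical argument along a subsequence extracted so as to realise almost-sure convergence, which does not affect the conclusion.

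The step I expect to be the main obstacle is the continuous dependence on the initial condition under the mere local Lipschitz hypothesis on $a$: a global Gronwall estimate is unavailable, so I would localise by stopping the coupled processes at the exit time of a large ball, use the Lipschitz bound on that ball to obtain convergence up to the exit time, and then let the ball grow, invoking the non-explosion on $[0,T]$ guaranteed by the moment bound (A1) to ensure the exit times tend to infinity. Everything else---admissibility of $\tau^\ast$ for the perturbed problems and the Fatou passage---is routine and, pleasingly, uses only that $g$ is continuous and nonnegative, exactly as the remark preceding the lemma asserts.
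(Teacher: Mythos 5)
Your proposal is correct and follows essentially the same route as the paper: couple all starting points through a common $(B,Y)$, fix a stopping time (the paper takes an arbitrary $\tau\leq t$ and concludes via ``a supremum of lower semi-continuous functions is lower semi-continuous,'' which is just your $\epsilon$-optimal $\tau^\ast$ argument repackaged), prove continuous dependence on the initial condition by Gronwall after localizing at the exit time of a large ball, and finish with Fatou using only that $g$ is continuous and bounded below. The obstacle you flag---the merely local Lipschitz coefficient---is handled in the paper exactly as you propose, so there is nothing to fix.
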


We now establish that $v$ is locally Lipschitz continuous as a function of $x$ in
the continuation region $\mathcal{C}=\{ (x,y,t)\in \mathbb{R}\times \mathcal{S} \times (0,T]: \, v(x,y,t)>g(x)\}$ via coupling of stochastic processes and elementary properties of Brownian motion.
Let us introduce the following notation.
For each $(y,t)\in \mathcal{S} \times (0,T]$ the $(y,t)$-section of the continuation region is
\begin{equation} \label{eq:yt_sec}
\mathcal{C}_{y,t}=\{x\in \mathbb{R}:\, v(x,y,t)>g(x)\}.
\end{equation}
As a consequence of the lower semi-continuity of $v(\cdot,y,t)$, for $t> 0$, the $(y,t)$-section $C_{y,t}$ is an open subset of $\mathbb{R}$.
Notice that if $t=0$ then $v(x,y,0)=g(x)$ for all $(x,y)$ and so $C_{y,0}=\emptyset$.
Similarly for each $y\in\mathcal{S}$, the $y$-section is
\begin{equation} \label{eq:y_sec}
\mathcal{C}_y=\{(x,t)\in \mathbb{R}\times (0,T]:\,v(x,y,t)>g(x)\}.
\end{equation}

\begin{proposition} \label{prop:locallyLipinx}
Assume that $g$ is continuous.
For each $(y,t)\in \mathcal{S}\times (0,T]$, the function $v(\cdot,y,t)$ is locally Lipschitz  continuous in the section $C_{y,t}$.
\end{proposition}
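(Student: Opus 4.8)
The plan is to exploit the absence of drift, which puts $X$ on its natural scale, so that exit probabilities from spatial intervals are \emph{exactly linear} in the starting point; this is what upgrades the global $\beta$-Hölder control inherited from $g$ to local Lipschitz control inside the continuation region. Throughout I fix $y$ and $t$ and take $x<x'$ in a compact set $K\subset\mathcal C_{y,t}$. Since $\mathcal C_{y,t}$ is open and $v(\cdot,y,\cdot)$ is $\beta/2$-Hölder in time uniformly in a neighbourhood of each point (Proposition~\ref{prop:holderInTime}), I can enclose $K$ in a space-time box $R=(x_0-2\delta,x_0+2\delta)\times\{y\}\times(t-\eta,t]\subset\mathcal C$ with $\delta\asymp\mathrm{dist}(K,\partial\mathcal C_{y,t})$, and by (A2) together with the growth of $g$ the value $v$ is bounded on $\overline R$, say by $M<\infty$ (recall $v\ge 0$). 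The tool I would then use is the standard optimal-stopping fact that $e^{-\alpha s}v(X_s,Y_s,t-s)$ is a martingale up to the first exit of $(X_s,Y_s,t-s)$ from $\mathcal C$.

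Next I would couple the two diffusions started at $x$ and $x'$ by a reflection coupling: drive $X^{x'}$ by $B$ and $X^{x}$ by $-B$ up to the meeting time $T_c=\inf\{s:X^x_s=X^{x'}_s\}$, and by the same Brownian motion afterwards, using the common chain $Y$ for both (it is independent of $B$). Because there is no drift, the gap $D_s=X^{x'}_s-X^{x}_s$ solves $dD_s=\{a(X^{x'}_s)+a(X^{x}_s)\}\sigma(Y_s)\,dB_s$, a nonnegative continuous local martingale with $D_0=x'-x$; since $a\sigma$ is bounded below on $R$ its quadratic variation diverges, so $D$ reaches $0$ in finite time and, by pathwise uniqueness, the two paths merge at $T_c$ and coincide for all later times, including across jumps of $Y$. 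Let $\rho=\rho^{x}\wedge\rho^{x'}$ be the first time either coupled process leaves $R$; since each stays in $R\subset\mathcal C$ on $[0,\rho)$, optional sampling of the martingale above at $\rho$ gives $v(x,y,t)=E[e^{-\alpha\rho}v(X^{x}_\rho,Y_\rho,t-\rho)]$ and likewise for $x'$. On $\{T_c\le\rho\}$ the paths have already merged, so $X^{x}_\rho=X^{x'}_\rho$ and the two right-hand sides agree; hence $|v(x,y,t)-v(x',y,t)|\le 2M\,P(T_c>\rho)$.

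It remains to show $P(T_c>\rho)=O(|x-x'|)$, i.e. that failing to couple before leaving $R$ is a rare event of probability linear in $|x-x'|$. Splitting the exit into spatial, temporal, or by a jump of $Y$, I would bound the three contributions separately. A spatial exit before coupling forces $D$ to reach a macroscopic size before hitting $0$, and since $D$ is driftless, optional stopping on $D$ itself yields the \emph{exact} gambler's-ruin bound $P(D\text{ reaches }\delta\text{ before }0)=|x-x'|/\delta$ (the motion of the midpoint process, whose volatility is $O(D)$, is of lower order); this is precisely the step where driftlessness is indispensable. For the temporal exit, the one-sided hitting-time estimate for the martingale $D$ gives $P(T_c>\eta)=O(|x-x'|/\sqrt{\eta})$. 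For a jump of $Y$ before coupling, conditioning on $B$ and using that the first jump time satisfies $J_1\mid B\sim\mathrm{Exp}(q_y)$ gives $P(J_1<T_c\wedge\rho)\le q_y\,E[T_c\wedge\rho]$, and $E[T_c\wedge\rho]=O(|x-x'|)$ because the one-sided coupling time from height $|x-x'|$, once capped at the box-exit time, integrates to $O(|x-x'|)$. Summing, $P(T_c>\rho)\le C(\delta,\eta,q_y)\,|x-x'|$, so $v(\cdot,y,t)$ is Lipschitz on $K$ with constant $\asymp 2M/\delta$, depending on $K$ only through its distance to $\partial\mathcal C_{y,t}$.

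The main obstacle is twofold. First, constructing a legitimate reflection coupling for the \emph{state-dependent} SDE rather than for Brownian motion, and verifying that the gap process genuinely reaches $0$ and that the paths then coincide thereafter; this rests on $a\sigma$ being bounded away from $0$ on $R$ and on pathwise uniqueness. Second, and more delicate, is confirming that \emph{all} three non-coupling probabilities are $O(|x-x'|)$ rather than merely $o(1)$: the spatial term is handled cleanly by the natural-scale gambler's-ruin identity, whereas the jump term requires the quantitative fact that $E[T_c\wedge\rho]$ vanishes linearly in $|x-x'|$, which itself relies on the fast, heavy-tailed coupling of nearby driftless diffusions. I note finally that only $0\le v\le M$ on $\overline R$ is used, so the argument requires nothing beyond continuity and lower-boundedness of $g$, in agreement with the remark preceding the statement.
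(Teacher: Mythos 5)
Your proposal is correct and follows essentially the same route as the paper's own proof: the mirror coupling (driving the copy started at the other point by $-B$, with the common chain $Y$), the gap process viewed as a driftless local martingale whose quadratic-variation rate is bounded below on the box, the DDS time change, and the linear-in-$|x-x'|$ one-sided first-passage estimate for Brownian motion, with the jump of $Y$ controlled through the independence of the chain from the time-changed driving noise. The differences are only organizational: you split the failure-to-couple event into spatial, temporal and jump pieces and bound the jump term by $q_y\,E[T_c\wedge\rho]=O(|x-x'|)$, whereas the paper folds the temporal cutoff and the first jump into a single stopping time and invokes the closed-form Borodin--Salminen formula for the infimum of Brownian motion over an independent exponential horizon; your explicit treatment of the spatial exit (gambler's ruin on the gap plus the nearly frozen midpoint, whose volatility is $O(D_s)$ by the Lipschitz property of $a$) is in fact more careful than the paper's, which disposes of it tersely by stopping both processes at the first exit from $I$.
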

\begin{proof}
Fix $(y,t)\in \mathcal{S}\times (0,T]$ and note that without loss of generality, we may assume that $C_{y,t}$ is non-empty.
Let $I$ be an open and bounded interval whose closure is in $C_{y,t}$.
Since $v$ is increasing in $t$, the open subsets $C_{y,t}$ increase in $t$ as well and we must have that
$C_{y,t_0}\subseteq C_{y,t}\subseteq C_{y,t_1}$ whenever $0<t_0<t<t_1\leq T$. Let $t_0<t$ be sufficiently large so that $I\subset C_{y,t_0}$.
In particular, for some $t_1<T$,
\[
R:=I\times (t_0,t_1)\subsetneq C_y.
\]

\pagebreak
Let $x,x'\in I$ and assume without loss of generality that $x>x'$ (the reverse case follows by symmetry). Suppose that $X_t=x+\int_0^t a(X_s,Y_s)dB_s+\int_0^t \mu(X_s,Y_s)ds$ starts from $x$
and let $X'_t =x'+\int_0^t a(X'_s,Y_s)dB'_s+\int_0^t \mu(X'_s,Y_s)ds$ be started from $x'$, with driving Brownian motion $B'=-B$.
Note that, by uniqueness in law of the solution to (\ref{eq:dynX}),
$$
v(x',y,t)=\sup_{\tau\leq t}E( e^{-\alpha\tau}g(X'_\tau)).
$$

Define $Z=(Z_s)_{s\geq 0}$ by $Z_s:=X_s-X_s^\prime$ so that $Z_s=r+M_s+A_s$ where $r=x-x'>0$ and
\begin{eqnarray}\label{eq:ctslocmtgM}
\begin{aligned}
M_s&=\int_0^s [\,a(X_u,Y_u)+a(X_u^\prime,Y_u)\,]dB_u, \\
A_s&=\int_0^s [\mu(X_u,Y_u)-\mu(X'_u,Y_u)]du, \quad s\geq 0.
\end{aligned}
\end{eqnarray}

Consider the coupling time $\tau(x,x'):=\inf\{s>0:\,Z_s\leq 0\}$ of $X$ and $X'$.
Let $t_2=t-t_0$.
The function $v(\cdot,y,t)$ is bounded on $I$ (as it is lower semi-continuous and finite), say by $K/2$.
Thus upon stopping $X_s$ and $X'_s$ at first exit from the interval $I$ it follows that
\begin{equation} \label{eq:differencepayoff}
|v(x,y,t)-v(x',y,t)| \leq K\, P(\,t_2<\tau(x,x')\,).
\end{equation}

We next show that $P(t_2<\tau(x,x'))$ is $O(x-x')$ as $r=x-x'\rightarrow 0$, which implies Lipschitz continuity on the interval $I$.
Since $a$ and $\mu$ are continuous, we must have that $a(\cdot,\cdot)\geq \delta>0$ and $\mu(\cdot,\cdot)\leq \eta$ on  $I\times \cS$. Setting $k(s)=4 \delta^2\,s$ and $\nu=\frac{\eta}{2\delta^2}$ we obtain
\[
\begin{split}
\langle M\rangle_s  & =\int_0^s [\,a(X_u)+a(X_u^\prime)\,]^2\sigma^2(Y_u)du \\
& \geq \int_0^s[\,2\min\{a(X_u,Y_u),a(X_u^\prime,Y_u)\}\,]^2du \geq k(s).
\end{split}
\]
and
$$
A_s\leq \eta s\leq \nu {\langle M\rangle_s}.
$$
Using the Dubins-Dambis-Schwarz Theorem  (see \cite[V.1.6]{Rev-Yor}), there is a standard Brownian motion $W=(W_s)_{s\geq 0}$
such that $M_s=W_{\langle M\rangle_s}$.
Moreover,  $\langle M\rangle_s$ has continuously increasing paths and so
\begin{eqnarray}\label{Lip}
\begin{aligned}
\{\, t_2 < \tau(x,x') \,\}
& \subseteq \{\, W_{\langle M\rangle_s}+\nu \langle M\rangle_s >-r,\; \forall\,s\leq t_2\, \}\\
&=\{\, W_s+\nu s>-r,\; \forall s\leq \langle M\rangle_{t_2} \,\}  \\
& \subseteq \{\,\inf_{s\leq \langle M\rangle_{t_2}}W_s+\nu s \geq -r\,\}
  \subseteq \{\,\inf_{s\leq k(t_2)}W_s+\nu s \geq -r\,\}.
\end{aligned}
\end{eqnarray}

It follows that (see e.~g. \cite{BS})
\[
\begin{aligned}
P(t_2<\tau(x,x')) & \leq P\left(\inf_{s\leq k(\tau_2)}W_s +\nu s\geq -r\right) \\
                  & \leq 1-\Phi\left(\frac{-r-\nu k(t_2)}{\sqrt {k(t_2)}}\right)-e^{-2\nu r}\Phi\left(\frac{-r+\nu k(t_2)}{\sqrt {k(t_2)}}\right)
\end{aligned}
\]
and the right-hand side is $O(r)$ as $r\rightarrow 0$, 
which concludes the proof.
\end{proof}

\begin{theorem} \label{thm:cont}
Consider the value function $v$ in (\ref{eq:valueOSPintro}).
For each $y\in \mathcal{S}$, the function $v(\cdot,y, \cdot):\mathbb{R}\times [0,T]\rightarrow \mathbb{R}$ is locally $\beta/2$-H\"older continuous.
\end{theorem}
\begin{proof}
Let $K$ be a compact subset of $\mathbb{R}\times [0,T]$ and $(x,t), (x',t')\in K$.
Fixing $y\in \mathcal{S}$, we shall write $v(x,t)\equiv v(x,y,t)$ and $v(x',t')\equiv v(x',y,t')$.

By Proposition \ref{prop:holderInTime}, $v(x',\cdot)$ is $\beta/2$-H\"older continuous. Recall that the H\"older constant in (\ref{eq:Lipsconstant}) depends on $x$ continuously. In particular, it is bounded in the compact set $K$ as a function of $x$, say by some $\bar{N}>0$. Then we have
\[
|v(x',t)-v(x',t')|\leq \bar{N}|t-t'|^{\beta/2}.
\]
Also, by Proposition \ref{prop:locallyLipinx}, $v(\cdot,t)$ is locally Lipschitz continuous in $C_{y,t}$
and $\beta$-H\"older continuous everywhere else (since $v=g$ outside $C_{y,t}$).
Thus $v(\cdot,t)$ is $\beta/2$-H\"older continuous on the bounded set $\{x\in \mathbb{R}:\,(x,t)\in K\}$,
and so for some $C=C(K)>0$ we have
\[
|v(x,t)-v(x',t)|\leq C|x-x'|^{\beta/2}.
\]
Setting $M=\max\{C,\bar{N}\}$, the triangle inequality yields
\[
\begin{split}
|v(x,t)-v(x',t')|  & \leq C |x-x'|^{\beta/2}+\bar{N}|t-t'|^{\beta/2} \\
                   & \leq M\, 2^{1-\beta/4}\,(\sqrt{|x-x'|^2+|t-t'|^2}\,)^{\beta/2}.
\end{split}
\]

This shows that $v$ is locally $\beta/2$-H\"older continuous.
\end{proof}

\section{Smoothness of $v$} \label{sec:smooth}

In this section we use the continuity of $v$ to show smoothness via a localization argument.

\pagebreak
\begin{theorem} \label{thm:regThm}
For each fixed $y\in \mathcal{S}$, define $f:\mathbb{R}\times [0,T]\rightarrow \mathbb{R}_+$ by
\[
f(x,t):=\sum_{y'\neq y}\pi[y,y']\,v(x,y',t).
\]
By Theorem \ref{thm:cont}, $v(\cdot,y',\cdot)$ is a $\beta/2$-H\"older continuous function and so is $f$.

Define  $\tilde{L}$ to be the linear operator given by
\[
\tilde{L}:= \frac{1}{2}  a(x,y)^2 \frac{\partial^2}{\partial\,x^2}+\mu(x,y)\frac{\partial}{\partial x} -\frac{\partial}{\partial\,t}-\pi[y],
\]
where $-\pi[y]=\sum_{y'\neq y}\pi[y,y']$ is the rate of leaving $y$.

The function $v$ in (\ref{eq:valueOSPintro}) is the probabilistic solution of the initial-boundary value problem
\[
\begin{array}{rll}
(\mathbb{L}^\pi-\alpha\, )  v(x,y,t) & =0,      & \mbox{in}\; \mathcal{C} \\
                            v(x,y,0) & =g(x),   & \mbox{in}\; \mathbb{R}\times \mathcal{S}\times\{0\} \\
                            v(x,y,t) & = g(x),  & \mbox{on}\; \partial \mathcal{C}
\end{array}
\]t
where $\mathcal{C}=\{(x,y,t)\in \mathbb{R}\times \mathcal{S}\times(0,T]:v(x,y,t)>g(x)\}$, and $\mathbb{L}^\pi$ is given in equation (\ref{bigL}).
In particular, $v(\cdot,y,\cdot) \in C^{2,1}(\mathcal{C})$.
\end{theorem}
\begin{proof}
By definition of $\mathcal{C}$ and $v$, it is clear that $v=g$ on $\partial \mathcal{C}$ and also $v(x,y,0)=g(x)$.

Fix $y\in \mathcal{S}$ and consider the $y$-section $\mathcal{C}_y$ which is an open subset of $\mathbb{R}\times [0,T]$.
Let $R=(x_0,x_1)\times (t_0,t_1)$ be an open and bounded rectangle in $\mathcal{C}_y$.
Now consider the classical initial-boundary value problem
\begin{equation} \label{eq:revisedDirichletFinite}
\begin{aligned}
(\tilde{L}-\alpha )H(x,t) & =-f(x,t),    \;\,\qquad \mbox{in}\; R\cup B^u \\
                 H(x,t)  & = v(x,y,t),          \qquad  \mbox{on}\; \partial R \backslash B^u,
\end{aligned}
\end{equation}
where 
$B^u=(x_0,x_1)\times \{t_1\}$.

Given that both $a$ and $-f$ are uniformly H\"older in $R$, and that the operator $\tilde{L}-\alpha$ is uniformly H\"older parabolic in $R$ (as $a^2(\cdot,y)$ is bounded away from zero in $R$),
there exists a unique solution $H$ to (\ref{eq:revisedDirichletFinite}) which is continuous in $\bar{R}$ and such that
$H \in C^{2,1}(R)$ (see Theorem 6.3.6 in \cite{Friedman1975}).

Recall that $y\in \mathcal{S}$ is fixed from the beginning.
Now, for each $(x,t)\in R$ and $y'\in \mathcal{S}$, extend $H$ as follows:
\[
h(x,y',t):=
\begin{cases}
H(x,t)      & \mbox{if} \, y'=y \\
v(x,y',t)   & \mbox{if} \, y'\neq y.
\end{cases}
\]

\pagebreak
Fix $(x,t)\in R$ and consider the stopping times $T_1=\inf\{s\geq 0: (X_s,t-s)\notin R\}$
and $T_2=\inf\{s\geq 0: Y_s\neq y\}$, and set $\tau=T_1\wedge T_2$.
Notice that $T_1\leq t$ $P_{x,y}$-a.s.
Since $Y_s\equiv y$ up to the time $\tau$, an application of Dynkin's formula yields
\[
\begin{split}
h(x,y,t) & = E_{x,y}\left(e^{-\alpha \tau} h(X_\tau, Y_\tau, t-\tau)\,\right) \\
        & \quad - E_{x,y}\left( \int_0^\tau e^{-\alpha \tau}(\mathbb{L}^\pi-\alpha )h(X_s,Y_s,t-s)ds\,\right).
\end{split}
\]
\indent Now, $h(X_\tau, Y_\tau, t-\tau)=v(X_\tau, Y_\tau, t-\tau)$ $P_{x,y}$-a.s. Indeed, if $\tau=T_1$ we use the boundary condition in (\ref{eq:revisedDirichletFinite}),
otherwise the equality still holds by the definition of $h$.
Moreover, after a simple algebraic manipulation it can be seen that, for each $(x,t)\in R$,
\[
(\mathbb{L}^\pi -\alpha)h(x,y,t)=(\tilde{L}-\alpha)h(x,y,t)+f(x,t)=0.
\]
We then arrive to the expression
\[
h(x,y,t) = E_{x,y}\left(e^{-\alpha \tau} v(X_\tau, Y_\tau, t-\tau)\,\right), \qquad (x,t)\in R.
\]
Given that $\tau$ is bounded above by the first exit time of $(X_s,t-s)$ from the continuation region,
the dynamic programming principle implies that $h(\cdot,y,\cdot)=v(\cdot,y,\cdot)$ in the rectangle $R$.
Therefore $v(\cdot,y,\cdot)\equiv H$ in $R$ and so $v(\cdot,y,\cdot) \in C^{2,1}(R)$ and
\[
(\mathbb{L}^\pi -\alpha)v(x,y,t)=0 \qquad \mbox{in}\; R.
\]
which concludes the proof since $R$ was arbitrary.
\end{proof}


\section{Extremal payoff scenarios} \label{sec:extsce}

In this section, we assume that $a$ and $\mu$ are both in product form and give conditions under which the function $V^l$ in (\ref{eq:infvalue}) is the value function of an optimal stopping problem associated with an extremal jump rate scenario.
To emphasize the dependence on $\pi$, we write $(X^\pi, Y^\pi)$ and $v(x,y,t;\pi)$ instead of $(X,Y)$ and $v(x,y,t)$, respectively.
Roughly, one expects that increasing the variance of $X^\pi$ (controlled by $\pi$) expedites the time at which $X^\pi$ reaches the high values of $g$, hence increasing the payoff since the penalty for the elapsed time via the discount factor is smaller. This intuition leads to the natural candidate strategy: to choose minimal (resp. maximal) variance to achieve the infimum (resp. supremum).

We make the following standing assumption

\medskip
{\bf (A2)}
$
E_{x,y} \left(\sup_{0\leq t\leq T} [e^{-\alpha \tau}g(X^\pi_t)] \right) <\infty.
$

\medskip
Notice that if $g$ has polynomial growth and the coefficients $a$ and $\mu$ have linear growth 
then (A2) follows by results on estimates of the moments of regime-switching diffusions (see Appendix \ref{app:estimates}). 
In particular, the expectation is uniformly bounded over all $\pi$ because it does not depend on the transition rates. 

We now consider the special case 
\begin{equation}\label{dyn}
X_t=x+\int_0^t a(X_s)\sigma(Y_s)dB_s+\int_0^t r(X_s)\mu(Y_s)ds
\end{equation}
and assume that $r$ is positive.
Before we state the main result of this section we need some preliminary results and definitions.
\begin{definition}
We say that transition rates $q_{i,j}$ on $\cS$ are {\em well-ordered} if
\begin{eqnarray}\label{order}
\begin{aligned}
&\sum_{w\geq z}q_{i,w}\leq \sum_{w\geq z}q_{j,w}\text{ for any }i<j<z\\
\text{and}&\\
&\sum_{w\leq z}q_{i,w}\geq \sum_{w\leq z}q_{j,w}\text{ for any }z<i<j
\end{aligned}
\end{eqnarray}
\end{definition}
\begin{definition}
We say that a coupling of $Y$ and $Y'$, copies of a Markov chain on $\cS$  with $Y_0=y<Y_0=y'$, is an {\em ordered coupling} if  $Y'_t\geq Y_t$ for all $t$ a.s.
\end{definition}
 The following theorem, although fairly obvious, may be of some independent interest.
\begin{theorem}
Suppose that $(q_{i,j})_{(i,j)\in \cS\times\cS}$ is a $Q$-matrix on $\cS$, then
we can find an ordered coupling on $\cS$ with transition rates $q$ for every $y<y'$ if and only if the transition rates $q_{i,j}$ are well-ordered.
\end{theorem}
\begin{proof}
Let the coupling measure be $\P$. Consider $Y$ and $Y'$.
Since they are ordered,
\begin{equation}\label{triv}
\P(Y_t\geq z)\leq \P(Y'_t\geq z)\text{ for all }t.
\end{equation}
Dividing both sides of (\ref{triv}) by $t$ and letting $t\rightarrow 0$, we obtain for $z>y'$:
$$
\sum_{w\geq z}q_{y,w}\leq \sum_{w\geq z}q_{y',w},
$$
and this is true for every $y<y'$. 
A similar argument gives the second inequality in (\ref{order}).

\pagebreak
Conversely, suppose that the transition rates $q_{i,j}$ satisfy (\ref{order}), we sketch a standard argument based on thinning Poisson processes.
Define independent Poisson processes $L$ and $M$ with suitably large rates $\lambda$ and $\mu$ respectively. Define an  independent sequence of independent, identically distributed  Uniform[0,1] random variables $(U_n)_{n\geq 1}$; these will be used for randomisation. We use jumps of $L$ to generate (possible) up-jumps of the copies of the Markov chain and jumps of $M$ to generate down-jumps.

Define the (joint) process $\Y$ as follows. First set
$$
Y^i_0=i \text{ for }i\in \cS.
$$
Then let $\Y$ be constant until the first jump time $J_1$ of $L+M$. Now, if the first jump is a jump of $L$, define $Y^i_{J_1}$ by
$Y^i_{J_1}\geq i$ and, for $k>i$
$$
Y^i_{J_1}\geq k \text{ if and only if } U_1\geq 1-\frac{\sum_{w\geq k}q_{i,w}}{\lambda}.
$$
Similarly, if the first jump is a jump of $M$, define $Y^i_{J_1}$ by
$Y^i_{J_1}\leq i$ and, for $k<i$
$$
Y^i_{J_1}\leq k \text{ if and only if } U_1\leq \frac{\sum_{w\geq k}q_{i,w}}{\mu}.
$$
Now proceed recursively, using the successive jumps of $L+M$ and the successive randomising $U_n$.
It is clear that the $i$th component of $\Y$ is a copy of the MC started at $i$. Then a quick comparison of the jump constructions shows that the components of $\Y$ remain ordered thanks to (\ref{order}) and an inductive argument on the chain at jump times of $L+M$.
\end{proof}
\begin{remark}
Note that any skip-free chain, i.e. one where $q_{i,j}=0$ for $|i-j|>1$, is well-ordered.
\end{remark}
\begin{definition}
For any $Q$-matrix, $q$, on $\cS$ define $q^\sigma$ by
$$
q^\sigma_{i,j}=\frac{q_{i,j}}{\sigma^2(i)}.
$$
\end{definition}
\begin{remark}
If $Y$ is a Markov chain with $Q$-matrix $\pi$, then $\pi^\sigma$ is the $Q$-matrix for $\tilde Y$, the Markov chain obtained by time-changing $Y$ using the additive functional $A$ given by $A_t=\int_0^t \sigma^2(Y_s)ds$.
\end{remark}

\pagebreak

\begin{theorem} \label{main}
Define $\pi^l$ by
\begin{equation} \label{eq:optimalpi}
\pi^l[y,y+z]= \inf A^+_{y,z}
\quad
\pi^l[y,y-z]= \sup A^-_{y,z}.
\end{equation}
Suppose that $\sigma(\cdot)$ is monotone increasing, and
$(\pi^l)^\sigma$ is well-ordered, then in the following three cases,
\begin{enumerate}
\item $\mu=0$;
\item $\mu/\sigma^2$ is decreasing and $g$ is decreasing;
\item $\mu/\sigma^2$ is increasing and $g$ is increasing;
\end{enumerate}
the constant rate matrix $\pi^l$ attains the infimum in (\ref{eq:infvalue}).
\end{theorem}
\begin{remark}
Theorem \ref{main} covers the case of pricing American put options with stochas-\\tic/regime-switching volatility. 
Here $\mu=1$, $r(x)=\alpha x>0$ and it is normally assumed that $Y$ is skip-free. Thus $\sigma^2$ increasing is enough for the result to hold.
\end{remark}
\begin{lemma} \label{eq:crucialMon}
Suppose that $\pi$ is such that $\pi^\sigma=(\pi^\sigma[i,j])$, $i,j\in \mathcal{S}=\{1,2,\ldots,m\}$ is well-ordered, then in case 1,2 and 3 of Theorem \ref{main} above
$v(x,\cdot,t;\pi)$ is also increasing on $\mathcal{S}$, for each $(x,t)\in\mathbb{R}\times [0,T]$.
\end{lemma}
\begin{proof}
Suppose that  $\sigma(\cdot)$ is increasing. In case 1, the result is proved in exactly the same way as Theorem 2.5 by Assing et al. \cite{AJO} with $Y$ replaced by $\sigma(Y)$. There it is insisted that $\pi^l$ is skip-free, but only stochastic monotonicity of $Y$ is actually used.
In case 2 a very similar, but extended argument can be used, time-changing away the $Y$ dependence in the diffusion term in the SDE for $X$
to
$$
d\tilde X_t=a(\tilde X_t)dW_t+r(\tilde X_t)\frac{\mu(\tilde Y_t)}{\sigma^2(\tilde Y_t)}
$$
 and deducing by uniqueness in law that the two time-changed solutions, $(\tilde X,\tilde Y)$ and $(\tilde X',\tilde Y')$, to the dynamics started
at $x,y$ and $x',y'$ with $y\leq y'$ and $x\geq x'$ have components which can be ordered:
$$\tilde X'\leq \tilde X\text{ and }\tilde Y'\geq \tilde Y.
$$
A similar argument with ordering on $X$ and $X'$ reversed, works in case 3.

\end{proof}


Since $\pi\in\mathcal{A}$, the generator $\mathbb{L}^\pi$ takes the form
\begin{equation}\label{eq:infgenRS}
\begin{split}
\mathbb{L}^\pi w(x,y,t) & = \frac{1}{2} a^2(x)\, \sigma^2(y) w_{xx}(x,y,t)+r(x) -w_t(x,y,t) \\
                        & \quad + \sum_{z=1}^{m-y}[w(x,y+z,t)-w(x,y,t)]\pi[y,y+z] \\
                        & \quad +\sum_{z=1}^{y-1}[w(x,y-z,t)-w(x,y,t)]\pi[y,y-z].
\end{split}
\end{equation}

The following verification result gives sufficient conditions for a suitable function $w$ to be a lower bound for $V^l$.

\begin{proposition}{\bf (Lower bounds on $V^l$)} \label{prop:verificationMCFinite}
Suppose that $w:\mathbb{R}\times \mathcal{S}\times[0,T] \rightarrow \mathbb{R}$ is a function such that
for each $y\in \mathcal{S}$, $w(\cdot,y,\cdot)$ is continuous, and 
the restriction of $w(\cdot,y,\cdot)$ 
on the open set $\{(x,t)\in \mathbb{R}\times[0,T]: \, w(x,y,t)>g(x)\}$ is $C^{2,1}$.
Suppose that $w$ satisfies:
\begin{equation} \label{eq:characterizeDirDiffFinite}
\begin{aligned}
\inf_{\pi}\,(\mathbb{L}^\pi- \alpha \,)w(x,y,t)           & =0\; \quad \qquad \mbox{in $\mathcal{C}$,} \\
w(x,y,0)  & =g(x)\qquad \mbox{on $\mathbb{R}\times \mathcal{S}\times\{0\}$,} \\
w(x,y,t)  & =g(x)\qquad \mbox{on $\mathbb{R}\times \mathcal{S}\times(0,T] \backslash \mathcal{C}$,}
\end{aligned}
\end{equation}
where $\mathcal{C}=\{(x,y,t)\in \mathbb{R}\times \mathcal{S}\times[0,T]: \, w(x,y,t)>g(x)\}$
and the infimum is taken over all constant and admissible rate matrices.
Then, for each initial condition $(x,y,u)$,
\begin{equation}\label{eq:lowerbdV}
w(x,y,u)\leq V^l(x,y,u).
\end{equation}
\end{proposition}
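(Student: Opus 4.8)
The plan is to prove the pointwise estimate $w(x,y,u)\le v(x,y,u;\pi)$ for every fixed admissible rate matrix $\pi\in\mathcal A$ and then take the infimum over $\pi$, since $V^l(x,y,u)=\inf_{\pi\in\mathcal A}v(x,y,u;\pi)$ by (\ref{eq:infvalue}). Two observations set the argument up. First, the boundary conditions in (\ref{eq:characterizeDirDiffFinite}) together with $\mathcal C=\{w>g\}$ give $w\ge g$ on all of $\mathbb R\times\mathcal S\times[0,T]$. Second, the generator $\mathbb L^{\pi}$ in (\ref{eq:infgenRS}) depends on $\pi$ only through the two entries $\pi[y,y+1]\in A^+_y$ and $\pi[y,y-1]\in A^-_y$ attached to the current state $y$; hence the pointwise infimum in (\ref{eq:characterizeDirDiffFinite}) is unchanged if the rate matrix is allowed to vary in time, and for \emph{any} admissible (possibly time-varying, adapted) $\pi$ one has $(\mathbb L^{\pi_s}-\alpha)w\ge 0$ at every point of $\mathcal C$.

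Fix $\pi\in\mathcal A$ and write $(X,Y)=(X^\pi,Y^\pi)$. Introduce the first-exit time from the continuation region $\tau_{\mathcal C}=\inf\{s\ge 0:(X_s,Y_s,u-s)\notin\mathcal C\}$, and note $\tau_{\mathcal C}\le u$, because the time coordinate reaches $0$ at $s=u$, where $w(\cdot,\cdot,0)=g$ so the point lies outside $\mathcal C$. First I would apply Dynkin's formula to the discounted process $e^{-\alpha s}w(X_s,Y_s,u-s)$ up to $\tau_{\mathcal C}$. Since $w(\cdot,y,\cdot)\in C^{2,1}$ exactly on the open $y$-section of $\mathcal C$, the formula is legitimate on $[0,\tau_{\mathcal C})$; the drift it produces is precisely $e^{-\alpha s}(\mathbb L^{\pi_s}-\alpha)w$, and the jumps of $Y$ are compensated by the last two summands of (\ref{eq:infgenRS}). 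To turn the accompanying stochastic integral $\int_0^{\cdot}e^{-\alpha s}\,w_x\,a(X_s)\sigma(Y_s)\,dB_s$ into a genuine martingale I would localize by the exit times $\tau_R=\inf\{s:|X_s|\ge R\}$: on $[0,\tau_{\mathcal C}\wedge\tau_R]$ the integrand is bounded (by continuity of $a$ and of $w_x$ on compact closures), so it has zero expectation. Combining this with the sign of the drift yields, for every $R$,
\[
w(x,y,u)\le E_{x,y}\!\left[e^{-\alpha(\tau_{\mathcal C}\wedge\tau_R)}\,w\big(X_{\tau_{\mathcal C}\wedge\tau_R},\,Y_{\tau_{\mathcal C}\wedge\tau_R},\,u-\tau_{\mathcal C}\wedge\tau_R\big)\right].
\]

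Letting $R\to\infty$, one has $\tau_R\to\infty$ a.s., so $\tau_{\mathcal C}\wedge\tau_R\to\tau_{\mathcal C}$; by continuity of $w$ and $g$, and because at the exit point $(X_{\tau_{\mathcal C}},Y_{\tau_{\mathcal C}},u-\tau_{\mathcal C})$ one lands in $\mathcal C^c=\{w=g\}$, the integrand converges a.s. to $e^{-\alpha\tau_{\mathcal C}}g(X_{\tau_{\mathcal C}})$. Splitting the expectation according to whether $\tau_{\mathcal C}\le\tau_R$, the part on $\{\tau_{\mathcal C}\le\tau_R\}$ increases to $E_{x,y}[e^{-\alpha\tau_{\mathcal C}}g(X_{\tau_{\mathcal C}})]$ by monotone convergence, and this is bounded above by $\sup_{\tau\le u}E_{x,y}[e^{-\alpha\tau}g(X_\tau)]=v(x,y,u;\pi)$ since $\tau_{\mathcal C}\le u$ is an admissible stopping time. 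Disposing of the residual term $E_{x,y}[e^{-\alpha\tau_R}w(X_{\tau_R},Y_{\tau_R},u-\tau_R)\,I_{\{\tau_R<\tau_{\mathcal C}\}}]$, which records the value of $w$ on $\{|X|=R\}$, is carried out using $P(\tau_R<\tau_{\mathcal C})\to 0$ together with the integrability afforded by Assumptions (A1)--(A2) (and, under (i$'$), the bounded support of $g$). Granting this, $w(x,y,u)\le v(x,y,u;\pi)$, and taking the infimum over $\pi\in\mathcal A$ gives (\ref{eq:lowerbdV}).

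I expect the main obstacle to be precisely this last passage to the limit. The function $w$ is assumed smooth only inside $\mathcal C$, and no global growth bound on $w$ is postulated, so the crux is to make Dynkin's formula rigorous up to $\tau_{\mathcal C}$ and to show that the boundary contribution on $\{|X|=R\}$ does not persist in the limit; this is exactly where the uniform-integrability estimates behind (A1)--(A2) must be brought to bear. By contrast, the identification of the drift with $(\mathbb L^{\pi_s}-\alpha)w$, the handling of the chain's jumps, and the reduction to a fixed $\pi$ are routine once the localization scheme is in place.
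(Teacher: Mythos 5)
Your proposal is correct and follows essentially the same route as the paper's proof: fix $\pi\in\mathcal{A}$, apply It\^o/Dynkin to $e^{-\alpha s}w(X^\pi_s,Y^\pi_s,u-s)$ up to the exit time of $\mathcal{C}$ localized by a bounded region, use that the pointwise infimum over constant admissible matrices forces $(L^{\pi_s}-\alpha)w\geq 0$ along adapted time-varying rates, and pass to the limit via the boundary conditions to get $w(x,y,u)\leq v(x,y,u;\pi)$ before taking the infimum over $\pi$. Even the integrability issue you flag in the final limit is the same one the paper faces, and it disposes of it by dominated convergence invoking Assumption (A2), where you instead split off the residual boundary term.
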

\begin{proof}
If $u=0$ then $V^l(x,y,0)=g(x)=w(x,y,0)$. Fix an initial condition $(x,y,u)\in \mathbb{R}\times\mathcal{S}\times (0,T]$.
Pick an arbitrary $\pi\in \mathcal{A}$ and
define
the process $N(\pi)=(N_t(\pi))_{t\geq 0}$
by
\[
N_t(\pi):= e^{-\alpha t} w(X^\pi_{t},Y^\pi_{t},u-t), \qquad 0\leq t\leq u
\]

Let $\hat{\tau}\equiv \hat{\tau}^{\pi}:=\inf\{t\geq 0: (X_t^{\pi}, Y^{\pi}_t,u-t)\notin \mathcal{C}\}\leq u$ and,
for each $R>0$, let $U_R\subset \mathbb{R}^2$ be an open ball centered at $(x,y)$ of radius $R$. Let $\tau_R$ be given by
\[
\tau_R=\min\{\hat{\tau},\, \inf\{t\geq 0: \, (X_t^\pi,Y_t^\pi)\notin U_R\}\,\}.
\]
Notice that $\tau_R\rightarrow \hat{\tau}$ almost surely as $R\rightarrow \infty$.

Since $w$ is sufficiently smooth in $\mathcal{C}$,
we can apply It\^o's formula for semimartingales (see Theorem II.33 in \cite{Protter})
to obtain, for each $0\leq t\leq u$,
\[
N_{t\wedge \tau_R}(\pi) - w(x,y,u)
=\int_{0}^{t\wedge \tau_R} e^{-\alpha u}(L^\pi_s - \alpha )w(X^\pi_{s},Y^\pi_{s},t-s)ds + M_{t\wedge \tau_R},
\]
where $M_t=\int_0^{t} e^{-\alpha s} w_x(X^\pi_{s},Y^\pi_{s},t-s) \,a(X_{s}^\pi) \sigma(Y^\pi_{s}) dB_s$
and $L^\pi_s$ is $\mathbb{L}^{\pi_s}$ as in (\ref{eq:infgenRS}).

Given that $w_x(\cdot,y,\cdot), w_y(\cdot,y,\cdot)$ and $a(\cdot)$ are continuous in $\mathcal{C}$, and $U_R$ is a bounded domain, it follows that
$M_{t\wedge \tau_R}$ has bounded quadratic variation for each $t\geq 0$, and hence the process $M_{\cdot \wedge \tau_R}$ is a true martingale.
Moreover, by (\ref{eq:characterizeDirDiffFinite}) we have that
\[
(L_s^\pi - \alpha\,)w(x,y,t-s) \geq 0 \qquad \forall\, (x,y,t-s)\in \mathcal{C},
\]
which yields, for each $R>0$,
$N_{t\wedge \tau_R}(\pi) - w(x,y,u) \geq M_{t \wedge \tau_R}$.
After taking expectation we obtain
\begin{equation}  \label{eq:crucialIneqMin}
w(x,y,u)\leq E_{x,y}( N_{t\wedge \tau_R}(\pi)).
\end{equation}
Using that $\tau_R \rightarrow \hat{\tau}$ and the boundary conditions in (\ref{eq:characterizeDirDiffFinite}),
we obtain the limit
\[
\lim_{t,R\rightarrow \infty} N_{t\wedge \tau_R}(\pi)
=e^{-\alpha \hat{\tau}} w(X^\pi_{\hat{\tau}},Y^\pi_{\hat{\tau}},u-\hat{\tau})
=e^{-\alpha \hat{\tau}} g(X^\pi_{\hat{\tau}}), \quad \mbox{a.s.} 
\]
Thus by dominated convergence (recall that (A2) is assumed), after taking the limit as $R\rightarrow \infty$ and $t\rightarrow \infty$ in (\ref{eq:crucialIneqMin}), we obtain
\begin{equation} \label{eq:minpi}
w(x,y,u)\leq E_{x,y}(e^{-\alpha \hat{\tau}} g(X^\pi_{\hat{\tau}}))
\leq \sup_{\tau \leq u}E_{x,y}(e^{-\alpha \tau} g(X^\pi_{\tau})),
\end{equation}
and this is true for each $\pi \in \mathcal{A}$.
Therefore
\[
w(x,y,u)\leq \inf_{\pi\in \mathcal{A}} \sup_{\tau \leq u}E_{x,y}(e^{-\alpha \tau} g(X^\pi_{\tau})) \, \equiv V^l(x,y,u)
\]
and the proof is complete.
\end{proof}

\noindent {\bf Proof of Theorem \ref{main}}.
Suppose that $\sigma$ is increasing.
First note that
\[
v(x,y,t;\pi^l)\equiv \sup_{\tau \leq t} E_{x,y} (e^{-\alpha \tau} g(X^{\pi^l}_\tau))
\geq \inf_{\pi\in\mathcal{A}} \sup_{\tau \leq t} E_{x,y}(e^{-\alpha \tau} g(X_\tau^\pi)),
\]
so that it remains to show the reverse inequality. 
By Lemma \ref{eq:crucialMon}, $v(x,\cdot,t;\pi^l)$ is increasing and so we have that
\begin{equation} \label{eq:extremes}
\begin{aligned}
\argmin_{\lambda \in A^+_{y,z}}\;[v(x, y+z,t;\pi^l)- v(x,y,t;\pi^l)]\,\lambda   & =\inf\, A^+_{y,z}=\pi^l[y,y+z], \\
\argmin_{\mu \in A^-_{y,z}}\; [v(x,y-z,t;\pi^l)- v(x,y,t;\pi^l)]\,\mu           &=\sup\, A_{y,z}^-  =\pi^l[y,y-z]
\end{aligned}
\end{equation}
so that the infimum in (\ref{eq:characterizeDirDiffFinite}) is attained at $\pi=\pi^l$.
This fact and Theorem \ref{thm:regThm} together imply that $w(x,y,t)=v(x,y,t;\pi^l)$ satisfies the system in (\ref{eq:characterizeDirDiffFinite}). 
Therefore all the conditions of Proposition \ref{prop:verificationMCFinite} are fulfilled by $w(x,y,t)=v(x,y,t;\pi^l)$ and the proof is complete. $\Box$

Reversing the labelling of the states in $\cS$ yields:

\begin{corollary} \label{cor:decresing}
Suppose that $\sigma(\cdot)$ is monotone decreasing.
and
$(\pi^s)^\sigma$ is well-ordered, where
\[
\pi_s^l[y,y+z]= \sup A^+_{y,z}
\quad
\pi_s^l[y,y-z]= \inf A^-_{y,z}
\]
then in the following three cases,
\begin{enumerate}
\item $\mu=0$;
\item $\mu/\sigma^2$ is increasing and $g$ is decreasing;
\item $\mu/\sigma^2$ is decreasing and $g$ is increasing;
\end{enumerate}
Then the result in Theorem \ref{main} remains true.
\end{corollary}

\appendix

\section{Estimates of moments and integrability} \label{app:estimates}
The main goal of this Appendix is to derive some estimates of the moments of the solution to a stochastic differential equation with regime-switching coefficients.

The proof of the proposition below is inspired by ideas in Kyrlov \cite{Krylov}, and it is somewhat an extension of Corollary 2.5.12 in that text. This result is of independent interest, and this is the reason why we assume the following general set-up.

Let $(W_t,\mathcal{F}_t)$ be a $d_1$-dimensional Brownian motion.
Suppose that $y=(y_t)_{t\geq 0}$ is a continuous-time Markov chain, adapted to $(\mathcal{F}_t)_{t\geq 0}$, with finite state space $\mathcal{S}\subset \mathbb{R}$.
The process $r$ determines the regime-switching dynamics.

For $d\in \mathbb{N}$ and $x_0\in \mathbb{R}^d$,
$x=(x_t)_{t\geq 0}$ is a progressively measurable process in $\mathbb{R}^d$, with respect to $(\mathcal{F}_t)_{t\geq 0}$,
satisfying that
\begin{equation} \label{eq:SDE}
x_t=x_0+\int_0^t\sigma_s(x_s,y_s)dW_s + \int_0^t b_s(x_s,y_s)ds, \qquad a.s.
\end{equation}
where $\sigma_t(x,y)$ is a random matrix of dimension $d\times d_1$; and
$b_t(x,y)$ is a random vector of dimension $d$.

The next result corresponds to Corollary 2.5.12 in \cite{Krylov} in the particular case when $\mathcal{S}$ is a singleton.

\begin{proposition} \label{cor:main}
Fix $T>0$, and the initial condition $(x_0,y_0)$. Let there exist a constant $K>0$ such that
\begin{equation} \label{eq:linear_grth}
\|\sigma_t(x,y)\|+|b_t(x,y)|\leq K(1+|x|), \qquad \mbox{for all} \quad t\geq 0, x\in \mathbb{R}^d, y\in \mathcal{S}.
\end{equation}
Then for all $t\in [0,T]$ and $q\geq 0$, there exists a positive constant $N=N(x_0, K, t, q)$ such that
\begin{equation} \label{eq:main_ineq_est}
E\left(\sup_{s\leq t} |x_s|^q \right)     \leq N.
\end{equation}
\end{proposition}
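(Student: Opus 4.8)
The plan is to follow the classical Itô--Gronwall scheme for moment estimates, as in Krylov's Corollary 2.5.12, observing first that the regime-switching chain $y$ plays no essential role: since $\mcS$ is finite and the linear growth bound (\ref{eq:linear_grth}) holds uniformly over $y\in\mcS$, every estimate below uses only the genuine pathwise bound $\|\sigma_s(x_s,y_s)\|+|b_s(x_s,y_s)|\leq K(1+|x_s|)$. I would begin by reducing to the case $q\geq 2$: for $0\leq q<2$ the elementary inequality $a^q\leq 1+a^2$ (valid for $a\geq 0$) gives $\sup_{s\leq t}|x_s|^q\leq 1+\sup_{s\leq t}|x_s|^2$, so the bound follows from the case $q=2$, which is itself covered by the argument for $q\geq 2$.

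For $q\geq 2$, set $f(x):=(1+|x|^2)^{q/2}$, which is $C^2$ with $\nabla f(x)=q(1+|x|^2)^{q/2-1}x$ and Hessian satisfying $|D^2f(x)|\leq C_q(1+|x|^2)^{q/2-1}$. I would localize with the stopping times $\tau_n:=\inf\{s\geq 0:|x_s|\geq n\}$, so that the stochastic integral appearing below is a true martingale and $f(x_{\cdot\wedge\tau_n})$ is bounded. Applying Itô's formula to $f(x_{t\wedge\tau_n})$ produces a drift term, a second-order (trace) term, and a stochastic integral. Using (\ref{eq:linear_grth}) together with the bounds on $\nabla f$ and $D^2f$, both the drift and trace integrands are controlled by $C(1+|x_s|^2)^{q/2}=Cf(x_s)$; taking expectations kills the martingale and Gronwall's inequality yields the pointwise bound $E\,f(x_{t\wedge\tau_n})\leq f(x_0)e^{Ct}$, uniform in $n$.

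To upgrade to the running supremum I would take the supremum over $s\leq t\wedge\tau_n$ \emph{before} taking expectations. The drift and trace integrals are bounded in expectation exactly as above. For the martingale term I would apply the Burkholder--Davis--Gundy inequality; since $|\nabla f(x)|^2\,\|\sigma\|^2\leq C\,f(x)^2$, the bracket is dominated by $C\int_0^{t}f(x_{s\wedge\tau_n})^2\,ds$, and a Cauchy--Schwarz/Young split
\[
C\,E\Big(\sup_{s\leq t\wedge\tau_n}f(x_s)\cdot\!\int_0^{t}\!f(x_{s\wedge\tau_n})\,ds\Big)^{1/2}\leq \tfrac12\,E\sup_{s\leq t\wedge\tau_n}f(x_s)+C'\!\int_0^{t}\!E\,f(x_{s\wedge\tau_n})\,ds
\]
lets me absorb half of the supremum into the left-hand side. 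The remaining integral is bounded by the pointwise estimate already obtained, giving $E\sup_{s\leq t\wedge\tau_n}f(x_s)\leq N$ uniformly in $n$; Fatou's lemma as $n\to\infty$ then delivers (\ref{eq:main_ineq_est}).

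The main obstacle is precisely this supremum bound: the BDG term reintroduces $\sup_{s\leq t}f(x_s)$, and the absorption step is legitimate only once one knows this quantity is finite, which is exactly why the localization by $\tau_n$ must be propagated through the entire argument and the limit $n\to\infty$ postponed to the very end via Fatou. By contrast, the Markov chain is a red herring for this estimate: the finiteness of $\mcS$ and the uniformity of (\ref{eq:linear_grth}) in $y$ make the computation identical to the non-switching case.
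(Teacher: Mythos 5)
Your proof is correct, but it takes a genuinely different route from the paper's. The paper argues in three steps: (I) assuming the process is bounded, it squares the SDE, controls the stochastic integral with Doob's $L^2$ maximal inequality (no BDG), and applies Gronwall to get $E\sup_{s\le t}|x_s|^2\le \bar{N}(x_0,K,t)$; (II) it passes from the second moment to the $q$-th moment via the inequality $E\,\eta^q\le (E\,\eta^2)^{q/2}$ applied to $\eta=\sup_{s\le t}|x_s|$; (III) it removes the boundedness assumption by the same localization-and-Fatou device you use. You instead apply It\^o's formula to $f(x)=(1+|x|^2)^{q/2}$, control the martingale term with Burkholder--Davis--Gundy, and absorb half of the running supremum into the left-hand side, having reduced $q<2$ to $q=2$ at the outset. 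As for what each approach buys: the paper's route is more elementary (Doob plus a Jensen-type step, no It\^o calculus on a power function), but its step (II) is only valid for $q\le 2$ --- for $q>2$ the map $u\mapsto u^{q/2}$ is convex, so $E\,\eta^q\ge (E\,\eta^2)^{q/2}$ and the reduction fails as written --- whereas your BDG/absorption scheme handles every $q\ge 2$ directly, so your argument in fact covers the full range $q\ge 0$ claimed in the statement and is the standard (essentially Krylov's) way to treat the higher moments. You were also right to flag the two delicate points: the absorption step is legitimate only because the localized supremum is finite, and the chain $y$ enters the estimate only through the uniform-in-$y$ linear growth bound, which matches the paper's remark that $N$ does not depend on the transition rates.
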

\begin{proof}
Fix an arbitrary $t\in [0,T]$ and $q \geq 0$.
We split the proof into three parts. 

\smallskip
\noindent {\bf (I).} Assume that $x_t(\omega)$ is bounded in $\omega$ and $t$.
Notice that
\[
|x_t|^2  \leq 4\left[ \, |x_0|^2+\left|\int_0^t \sigma_s(x_s,y_s) dW_s \right|^2 + \left| \int_0^t b_s(x_s,y_s)ds \right|^2 \,\right].
\]

The linear growth condition in (\ref{eq:linear_grth}) implies the following. First,
the stochastic integral $M_\cdot= \int_0^\cdot \sigma_s(x_s,y_s) dW_s$ satisfies
\[
E(\langle M \rangle_t) =E\,\left(\int_0^t \|\sigma_s(x_s,y_s)\|^2 ds \right)
\leq 2K^2 E\,\left( \int_0^t (1+|x_s|^2) ds \right)<\infty
\]
for all $t\geq 0$, since $x_t$ is assumed to be bounded. Then $M$ is a martingale.
Second, using H\"older's inequality,
\[
\left| \int_0^t b_s(x_s,y_s)ds \right|^2 \leq t\,\int_0^t |b_s(x_s,y_s)|^2ds \leq  2 K^2 t\,\int_0^t (1+|x_s|^2)ds.
\]
Putting the last assertions together we obtain, after taking supremum over $[0,t]$ and expectation,

\[
\begin{split}
&E \left(\sup_{0\leq s\leq t}  |x_s|^2 \right) \leq 4\left[ \, |x_0|^2+ E \left(\sup_{0\leq s\leq t} |M_s|^2 \right)
    + 2 K^2 t\, E \left(\int_0^t (1+ |x_s|^2)ds \right)\,\right] \\
        & \quad \leq 4 \left[\, |x_0|^2 + 4\, E \,|M_t|^2 + 2 K^2 t\, E \left( \int_0^t (1+ |x_s|^2)ds \right)\, \right] \\
        & \quad \leq 4 \left[\, |x_0|^2 + (2K^2)4 \, E \left(\int_0^t (1+|x_s|^2)ds \right)  + 2 K^2 t\, E \left(\int_0^t (1+ |x_s|^2)ds \right)\, \right] \\
        & \quad \leq 4 |x_0|^2 + 8K^2(4+t) \int_0^t \left(1+E\left( \sup_{0\leq u\leq s} |x_u|^2 \right)\,\right)ds
\end{split}
\]
where we have used Doob's inequality, the fact that $M^2_t-\langle M\rangle_t$ is a martingale
(see for instance \cite[II.1.7 and IV.1.3]{Rev-Yor}), the linear growth condition in (\ref{eq:linear_grth}), Fubini's Theorem and the boundedness of $x_t$.

Now set $\varphi(t)=\sup_{0\leq s\leq t} |x_s|^2$, $a=1+4 |x_0|^2$, and $b=8K^2(4+t)$, so that
\[
1+E(\varphi(t))\leq a +b\int_0^t \{\,1+ E(\varphi(s)) \,\} ds.
\]
Then, by Grownwall's Lemma, we have that $1+E \varphi(t) \leq a\, e^{b t}$, that is
\[
E \left(\sup_{0\leq s\leq t} |x_s|^2 \right) \leq \bar{N}(x_0,K,t)
\]
where $\bar{N}(x_0,K,t)=(1+4 |x_0|^2)e^{8K^2t(4+t)}$.

\smallskip

\noindent {\bf (II).}
Since $x_t$ is continuous and bounded in $t$, it follows that $\sup_{s\leq t} |x_s|^p =(\sup_{s\leq t} |x_s|)^p$ for any $p\geq 0$.
Using this equality with $p=q$ and then with $p=2$,
we obtain that
\[
E \left(\sup_{0\leq s\leq t} |x_s|^q \right)\leq \left(\, E \sup_{0\leq s\leq t} |x_s|^2\,\right)^{q/2}\leq N(x_0,K,t,q)
\]
where we also used H\"older's inequality in the form $E(\eta^q) \leq [E(\eta^2)]^{q/2}$. Here, $N\equiv N(x_0,K,t,q)=\bar{N}(x_0,K,t)^{q/2}$.

\smallskip

\noindent {\bf (III).} We now assume the general case for $x_t(\omega)$.

For each $R>0$, consider the stopping time $\tau_R=\inf\{t\geq 0: |x_t|\geq R\}$. Then the stopped process $x_{t\wedge \tau_R}(\omega)$ is bounded in $\omega,t$ and moreover,

\[
\begin{aligned}
x_{t\wedge \tau_R} & = x_0 + \int_0^{t\wedge \tau_R} \sigma_s(x_s,y_s)dW_s + \int_0^{t\wedge \tau_R} b_s(x_s,y_s)ds \\
                   & = x_0 + \int_0^t I\{s<\tau_R\}\sigma_s(x_{s\wedge \tau_R},y_{s\wedge \tau_R})dW_s \\
                   & \hspace{2cm} + \int_0^t I\{s<\tau_R\}b_s(x_{s\wedge \tau_R},y_{s\wedge \tau_R})ds.
\end{aligned}
\]

Notice that $x_{t\wedge \tau_R}$ solves (\ref{eq:SDE}) only that with the coefficients $\sigma_s(x,y)$,
$b_s(x,y)$ replaced by $I\{s<\tau_R\}\sigma_s(x,y), I\{s<\tau_R\}b_s(x,y)$, respectively.
However, for each fixed $\omega$,
\[
\|I\{s<\tau_R\}\sigma_s(x,y)\| \leq\|\sigma_t(x,y)\|,\quad \mbox{and} \quad |I\{s<\tau_R\}b_t(x,y)|\leq |b_t(x,y)|.
\]
Then the linear growth condition in (\ref{eq:linear_grth}) is satisfied for the coefficients of $x_{t\wedge \tau_R}$.

From parts {\bf (I)-(II)}, we know that
\[
E\left(\sup_{0\leq s\leq t} |x_{s\wedge \tau_R}|^q \right)  \leq   N, \qquad \mbox{for each $R>0$}.
\]
Given that $\lim_{R\rightarrow \infty} \tau_R = \infty$ a.s,
it follows that $\lim_{R\rightarrow \infty} |x_{s\wedge \tau_R}|^{q}= |x_s|^q$ a.s. by continuity of the paths of $x_t$.
As this is true for each $s\leq t$, we must have $|x_s|^q \leq \lim_{R\rightarrow \infty} \sup_{u \leq t} |x_{u\wedge \tau_R}|^{q}$ for each $s\leq t$.
Hence
\[
\sup_{0\leq s\leq t} |x_s|^q \leq \lim_{R\rightarrow \infty} \sup_{0\leq s\leq t}|x_{s\wedge \tau_R}|^{q}, \quad a.s.
\]

Finally, Fatou's Lemma implies
\[
E\left( \sup_{0\leq s\leq t} |x_s|^q \right)
\leq \liminf_{R\rightarrow \infty} E\left( \sup_{0\leq s\leq t}|x_{s\wedge \tau_R}|^{q}\right)
\leq N,
\]
and the proof is complete.
\end{proof}

Notice that the bound $N$ on (\ref{eq:main_ineq_est}) does not depend on the transition rates of the Markov chain $y$.

\section{Proof of Lemma \ref{lemma:lsc}} \label{app:proofs}

\noindent {\bf Proof of Lemma \ref{lemma:lsc}}.
Let us fix $y\in \mathcal{S}$ throughout.
Given an initial condition $x\in \mathbb{R}$, we shall denote by $X^{x}=(X_t^{x})_{t\geq 0}$ the solution to
\begin{equation} \label{eq:sdeFixedPi}
X_t=x+\int_0^t a(X_s,Y_s)dB_s+\int_0^t b(X_s,Y_s)ds, \qquad t\geq 0,\quad Y_0=y.
\end{equation}
We will show that, for every stopping time $\tau\leq t$ with $t\in(0,T]$, the mapping $x\mapsto E\,e^{-\alpha \tau}g(X_\tau^{x})$ is lower semi-continuous. This implies that
\[
x\mapsto \sup_{\tau \leq t} E(e^{-\alpha \tau}g(X_\tau^{x}))\,\equiv v(x,y,t)\qquad \mbox{is lower semi-continuous,}
\]
concluding the proof.

Fix $x\in \mathbb{R}$ and let $N$ be a neighborhood of $x$. For any $x'\in N$, define the stopping times $\tau_R^{x'}=\inf\{t\geq 0:\, |X_t^{x'}|\geq R\}$ for $R>0$.
Let $\tau_R:=\tau_R^x\wedge \tau_R^{x'}$, it follows that
\begin{eqnarray*}
(X_{t\wedge \tau_R}^{x}-X_{t\wedge \tau_R}^{x'})^2 \leq 4\left[\,|x-x'|^2\right.&+\left.\left|\int_0^{t\wedge \tau_R} \{a(X_s^{x},Y_s)-a(X_s^{x'},Y_s)\}dB_s\,\right|^2\right.\\
&+\left. (\int_0^{t\wedge \tau_R}(\mu(X_s^{x},Y_s)-\mu(X_s^{x'},Y_s))ds)^2  \right].
\end{eqnarray*}
Taking expectation on both sides we obtain
\[
E(|X_{t\wedge \tau_R}^{x}-X_{t\wedge \tau_R}^{x'}|^2 )
\leq 4|x-x'|^2+ 4\, D_R^2(1+T)\int_0^t E\,( |X_{s\wedge \tau_R}^{x}-X_{s\wedge \tau_R}^{x'}|^2) ds
\]
where  $D_R>0$ is a Lipschitz constant for $a(\cdot,y)$ and $\mu(\cdot,y)$ uniformly in $y$.
So, Gronwall's Inequality implies
\[
E(|X_{t\wedge \tau_R}^{x}-X_{t\wedge \tau_R}^{x'}|^2)
\leq 4|x-x'|^2 e^{(4\,D_R^2(1+T))\,t},
\]
so it is clear that $X_{t\wedge \tau_R}^{x'}\rightarrow X_{t\wedge \tau_R}^{x}$ in $L^2$-norm as $x\rightarrow x'$, for each $t > 0$ and $R>0$.

Let $\{x_n\}$ be a sequence in $N$ such that $x_n\rightarrow x$. By the previous argument (after passing to a subsequence and relabeling if necessary)
$X_{t\wedge \tau_R}^{x_n}\rightarrow X_{t\wedge \tau_R}^{x}$ a.s. for each $t>0$.
Since the paths of $X^{x_n}$ and $X^{x}$ are continuous and $\tau_R\rightarrow \infty$ as $R\rightarrow \infty$ a.s., we have that
\[
\lim_{n\rightarrow \infty} X_t^{x_n}= X_t^{x},\qquad  \forall\,t>0 \quad a.s.
\]
after letting $R\rightarrow \infty$. Finally, as $g$ is continuous and bounded from below, Fatou's Lemma yields
\[
E(e^{-\alpha \tau} g(X_\tau^{x})) 
\leq \liminf_{n\rightarrow \infty}E(e^{-\alpha \tau}g(X_\tau^{x_n}))
\]
for every stopping time $\tau\leq t$, that is, the mapping $x\mapsto E(e^{-\alpha \tau}g(X_\tau^{x}))$ is lower semi-continuous and the proof is complete. $\Box$

\section*{Acknowledgement}

A. Ocejo's research supported, in part, by funds provided by the University of North Carolina at Charlotte.

\end{document}